\documentclass[12pt,oneside,reqno]{amsart}
\usepackage{amsmath,amsthm,amssymb,epic,eepic,bm}
\usepackage{eqlist,eqparbox,color}
\usepackage{comment}

%%%%%%%%%%%%%%%%%%%%%%%%%%%%%%%%%%%%%%%%%%%%%%%%%%%%%%%

\setlength\textwidth{16cm}
\setlength\textheight{24cm}
\setlength\oddsidemargin{0cm}
\setlength\topmargin{-0.3cm}

\title[On pseudo BL-algebras and pseudo hoops with normal maximal filters]
{On pseudo BL-algebras and pseudo hoops with normal maximal filters}
\author[Michal Botur, Anatolij Dvure\v{c}enskij]{Michal Botur$^1$, Anatolij Dvure\v{c}enskij$^{1,2}$}
\thanks{Both authors gratefully acknowledge  the support by GA\v{C}R 15-15286S.  AD thanks also Slovak Research and Development Agency under contract APVV-0178-11,  grant VEGA No. 2/0059/12 SAV}
\address{$^1$Palack\' y University Olomouc, Faculty of Sciences, t\v r. 17.listopadu 12, CZ-771 46 Olomouc, Czech Republic}
\address{$^2$Mathematical Institute, Slovak Academy of Sciences, \v{S}tef\'anikova 49, SK-814 73 Bratislava, Slovakia}
\email{michal.botur@upol.cz, dvurecen@mat.savba.sk}
%\address{Department of Mathematics and Statistics, Faculty of Science,
%Masaryk University, {Kotl\'a\v r{}sk\' a\ 2}, 611~37 Brno,
%Czech Republic}
%\email{paseka@math.muni.cz}

\keywords{Pseudo BL-algebra, pseudo MV-algebra, maximal filter, pseudo hoop, unital basic pseudo hoop, normal-valued pseudo BL-algebra}
\subjclass[2010]{Primary 06D35, Secondary 03B50}
\begin{document}

\newtheorem{cl}{Claim}

\newtheorem{theorem}{Theorem}[section]
\newtheorem{lemma}[theorem]{Lemma}%[section]
\newtheorem{definition}[theorem]{Definition}%[section]
\newtheorem{example}[theorem]{Example}%[section]
\newtheorem{remark}[theorem]{Remark}%[section]
\newtheorem{corollary}[theorem]{Corollary}%[section]
\newtheorem{proposition}[theorem]{Proposition}
\newtheorem{problem}[theorem]{Problem}

\newcommand{\zl}{\mbox{$[\hspace{-1.8pt} [$}}
\newcommand{\zr}{\mbox{$]\hspace{-2pt} ]$}}
\newcommand{\n}{\color{red}}
\newcommand{\m}{\color{blue}}
\newcommand{\ra}{\rightarrow}
\newcommand{\rra}{\rightsquigarrow}
\newcommand{\A}{$\mathbf A=(A;\vee,\wedge, \cdot, \rightarrow, \rra, 0,1)$}
\newcommand{\squig}{\rightsquigarrow}
\newcommand{\lex}{\,\overrightarrow{\times}\,}

\begin{abstract}
We study the class of pseudo BL-algebras whose every maximal filter is normal. We present an equational base for this class and we extend these results for the class of basic pseudo hoops with fixed strong unit.

\end{abstract}

\maketitle

\section{Introduction}

The class of $\ell$-groups plays an important role in the study of many algebraic structures like MV-algebras, BL-algebras, hoops and their non-commutative generalizations which provide both commutative and non-commutative algebraic counter parts of the \L ukasiewicz many-valued logic and of the H\'ajek fuzzy logic, \cite{Haj}.

It is well-known that the class of normal-valued $\ell$-groups is the largest proper subvariety of the variety $\mathcal L$  of $\ell$-groups, see \cite[Thm 10C]{Gla}. They are defined by the equation $a+b \le 2b + 2a$ for all $a,b\ge 0$, see \cite[Thm 41.1]{Dar}. In \cite{GeIo, Rac}, there was defined a non-commutative generalization of MV-algebras called pseudo MV-algebras, see \cite{GeIo}, or generalized MV-algebras, see \cite{Rac}. The fundamental representation of pseudo MV-algebras by intervals in the positive cone of $\ell$-groups with strong unit together with the categorical equivalence was presented in \cite{151}. Thanks to this categorical equivalence it is possible to show that every variety of $\ell$-groups gives a variety of pseudo MV-algebras. The variety of normal-valued pseudo MV-algebras was described together with an equational base in \cite{156}. However in \cite{DvHo}, it was shown that the variety of normal-valued pseudo MV-algebras is not the largest proper subvariety of the variety of pseudo MV-algebras, $\mathcal{PMV}$. In \cite{DvHo}, there was defined the class $\mathcal M$ of pseudo MV-algebras whose every maximal ideal (or filter) is normal, and this class is even a proper subvariety of the variety $\mathcal{PMV}$. And the variety of normal-valued pseudo MV-algebras is a proper subvariety of the variety $\mathcal M$. We note that we do not know yet neither any equational base for the variety $\mathcal M$ nor whether does $\mathcal{PMV}$ has the largest proper subvariety. We note that within the variety $\mathcal M$ it is possible to prove many properties that are known for MV-algebras, see e.g. \cite{DvHo, 214, DDT}. In particular every pseudo MV-algebra from $\mathcal M$ admits a state, whereas there are pseudo MV-algebras that are stateless, \cite{156}. We note that a state is an analogue of a finitely additive probability measure and it serves as averaging processes for truth-value in \L ukasiewicz logic, \cite{Mun1}.

On the other hand, the Mundici representation theorem \cite{Mun} for MV-algebras and Komori's result \cite{Kom} show that within the least non-trivial variety of $\ell$-groups, the variety of Abelian $\ell$-groups, see \cite[Thm 10.A]{Gla}, we can find countably many subvarieties of MV-algebras. The same is possible to say that using the representation theorem for pseudo MV-algebras, \cite{151}, it is possible to find among any variety of $\ell$-groups a system of more finer varieties of pseudo MV-algebras. Therefore, the study of the variety $\mathcal M$ can give an interesting view also to a special class of $\ell$-groups.

A more general class than pseudo MV-algebras is the class of pseudo BL-algebras introduced in \cite{DGI1, DGI2}. They are a non-commutative generalization of H\'ajek's BL-algebras. They form a variety that is an algebraic counterpart of fuzzy logic, \cite{Haj}. Hence, pseudo BL-algebras are an algebraic
presentation of a non-commutative generalization of fuzzy logic. In \cite{GLP} there was presented a non-commutative generalization of hoops called pseudo hoops. Such pseudo hoops do not possess the least elements, in general, and for example, the negative cone of every $\ell$-group provides an example of a pseudo hoop. We note that pseudo hoops are overlapping with notions introduced already by Bosbach as residuated integral monoids in his pioneering papers \cite{Bos1, Bos2}.

Also for pseudo BL-algebras and pseudo hoops, it is possible to define the class of normal-valued algebras and the class of algebras where every maximal filter is normal. The equational base of normal-valued basic pseudo hoops was presented in \cite{BDK}. In \cite{DvKo}, there was presented an interesting class of pseudo BL-algebras, called kite pseudo BL-algebras. A kite uses an $\ell$-group and two injections from a one set into another. According to \cite[Lem 5.1]{DvKo}, it follows that every such kite pseudo BL-algebra has a unique filter and this filter is normal. In particular, if the two injections are bijections, then the kite is a pseudo MV-algebra belonging to $\mathcal M$.

In this paper, we present an equational base for the variety of pseudo BL-algebras where every maximal filter is normal as well as for the class of basic unital pseudo hoops. This means a basic pseudo hoop with a fixed strong unit. This notion corresponds to an $\ell$-group with a fixed strong unit which represents any pseudo MV-algebra by \cite{151}.

The paper is organized as follows. Section 2 defines pseudo BL-algebras and pseudo MV-algebras and it reminds their basic notions and properties. Section 3 studies equational base of the variety of pseudo BL-algebras where every maximal filter is normal. The methods developed for pseudo BL-algebras are extended for basic pseudo hoops with strong unit which is done in Section 4.

\section{Pseudo BL-algebras and Pseudo MV-algebras}

A {\it pseudo BL-algebra}, \cite{DGI1, DGI2}, is an algebra \A\ of
type $\langle 2,2,2,2,2,0,0\rangle $ satisfying for all $x,y,z \in A$ the following conditions:

\begin{enumerate}
\item[(i)] $(A;\cdot ,1)$ is a monoid (need not be commutative),
i.e., $\cdot$ is associative with neutral element $1$;

\item[(ii)] $(A;\vee ,\wedge,0,1)$ is a bounded lattice;

\item[(iii)] $x\cdot y\leq z$ iff $x\leq y\rightarrow z$ iff $y\leq x\squig z$;

\item[(iv)]
$(x\rightarrow y)\cdot x=x\wedge y=y\cdot (y\squig x)$ (divisibility condition);

\item[(v)] $(x \to y) \vee (y\to x) = 1 = (x\squig y)\vee (y\squig y)$ (prelinearity condition).
\end{enumerate}

We recall that $\wedge, \vee$ and $\cdot$ have higher priority than $\to$ or  $\squig$, and $A$ is a distributive lattice.

We  say that a pseudo BL-algebra $\mathbf A$ is a {\it BL-algebra} if
$x\cdot y = y \cdot x$ for all $x,y \in A$.  This is equivalent to the statement $\to = \squig$.

We denote by $\mathcal{PBL}$ and $\mathcal{BL}$ the variety of pseudo BL-algebras and BL-algebras, respectively.

Let  $\mathbf A$ be a pseudo BL-algebra. Let us define two unary operations
(two negations) $^-$ and $^\sim $ on $A$ such that $x^-:=x\rightarrow 0$
and $x^\sim :=x\squig 0$ for any $x\in A$.  It is easy to show that
$$
x\cdot y = 0 \Leftrightarrow  y \le x^\sim \Leftrightarrow   x \le y^-.
$$
Moreover, $\cdot$ distributes $\vee$ and $\wedge$ from both sides.

Let $x \in A$, for any integer $n\ge 0$, we define
$$
x^0:=1, \quad x^1 :=x, \quad x^{n+1}:=x^n\cdot x\ \mbox{ for } n\ge 1.
$$

A non-empty subset $F \subseteq A$ of a pseudo BL-algebra $\mathbf A$ is said to be a {\it filter} if (i) $x,y \in F$ implies $x\cdot y \in F,$ and (ii) $x\le y\in A$ and $x \in F$ imply $y \in F.$ According to  \cite[Prop 4.7]{DGI1}, a subset $F$ of $A$ is a
filter iff (i) $1 \in F$, and (ii) $x, x\to y \in F$ implies $y\in
F$ ($x, x\squig y \in F$ implies $y\in F$), i.e., $F$ is a {\it
deductive system}.

If $a\in A,$ then the filter, $F(a),$  generated by $a$ is the set
$$
F(a)=\{x\in M: x\ge a^n\ \mbox{for some}\ n \ge 1\}.\eqno(2.1)
$$
A filter $F$ of $\mathbf A$ is (i) {\it maximal} if $F$ is a proper subset of $A$ and it is not a proper subset of any proper filter of  $\mathbf A$, (ii) {\it prime} if $x\vee y \in F$ implies $x\in F$ or $y\in F$, (iii) {\it minimal prime} if $F$ is prime and if $G$ is any prime filter of $\mathbf A$ such that $G\subseteq F$, then $G=F$,  and (iv)
{\it normal} if $x\to y \in F$ iff $x\squig y \in F$; this
is equivalent to $a\cdot F = F\cdot a$ for any $a \in A$;  here $a\cdot
F = \{a\cdot h:\ h \in F\}$ and $F\cdot a = \{h\cdot a:\ h \in F\}.$ We note that every maximal filter is prime.

We denote by $\mathcal F(\mathbf A)$, $\mathcal M(\mathbf A)$,  $\mathcal P(\mathbf A)$, $\mathcal N(\mathbf A)$, and $\mathcal{NM}(\mathbf A)$, the set of all filters, maximal filters, prime filters, normal filters, and maximal filters that are normal, respectively.

We denote by $\mathcal{MNPBL}$ the class of pseudo BL-algebras $\mathbf A$ such that either every maximal filter of $\mathbf A$ is normal or $\mathbf A$ is trivial (i.e. $A=\{1\}$). For example, every maximal filter of a linearly ordered pseudo BL-algebra is normal, see \cite[Thm 4.5]{212}; in fact, there is a unique maximal filter.
By \cite[Thm 4.1]{DGK}, \cite[Prop 4.10]{GLP}, $\mathcal{MNPBL}$ is a proper subvariety of the variety $\mathcal{PBL}$ of pseudo BL-algebras. This result for pseudo MV-algebras was firstly established in \cite[Prop 6.2]{DDT} using methods developed in \cite{DvHo}.

Let $g<1$ be an element of a pseudo BL-algebra $\mathbf A$. We say that a {\it value} of $g$ is a filter $V$ of $\mathbf A$ such that $g \notin V$, and $V$ is maximal with respect to this property; it is a prime filter. For any value $V$ of an
element $g$, there is a unique least filter $V^*$
properly containing $V$; it is called a {\it cover} of $V$. It  is
equal to the filter generated by $V$ and the element $g.$

We say that a pseudo BL-algebra $\mathbf A$ is {\it normal-valued} if every value
$V$ in $\mathbf A$ is normal in its cover $V^*$, i.e. $x,y \in V^*$ implies
$x\to y \in V$ iff $x\squig y \in V$. By \cite[Cor 4.5]{212}, every
linear pseudo hoop is normal-valued. Let $\mathcal {NVPBL}$ be the
family of normal-valued pseudo BL-algebras. By \cite[Cor 6.9]{BDK}, $\mathcal {NVPH}$ is a variety of the variety of pseudo BL-algebras that is a subvariety of the variety $\mathcal{MNPBL}$. The set of identities characterizing the variety of normal-valued pseudo BL-algebras was presented in \cite{BDK}. In \cite[Thm 4.2]{DGK} it was shown that $\mathcal{NVPBL}$ is a proper subvariety of the variety $\mathcal{MNPBL}$.

If $F$ is a normal filter, then the relation $x\sim_F y$ iff $x\to y, y\to x\in F$ (equivalently $x\rra y, y\rra x\in F$) is a congruence such that $F=\{a\in A: a\sim_F 1\}$. Conversely, for every congruence $\sim$ on $A$, there is a unique normal filter $F$ such that $\sim = \sim_F$, \cite[Prop 1.9]{DGI1}.

A {\it pseudo MV-algebra} is an algebra $\mathbf A=(A; \oplus,^-,^\sim,0,1)$
of type $\langle 2,1,1,0,0\rangle$ such that the following axioms (we use
axioms from \cite{GeIo}) hold for all $x,y,z \in A$ with an additional
binary operation $\odot$ defined via
$$ x \odot y =(x^- \oplus y^-)
^\sim
$$
\begin{enumerate}
\item[{\rm (A1)}]  $x \oplus (y \oplus z) = (x \oplus y) \oplus z$;

\item[{\rm (A2)}] $x\oplus 0 = 0 \oplus x = x$;

\item[{\rm (A3)}] $x \oplus 1 = 1 \oplus x = 1$;

\item[{\rm (A4)}] $1^\sim = 0;$ $1^- = 0$;

\item[{\rm (A5)}] $(x^- \oplus y^-)^\sim = (x^\sim \oplus y^\sim)^-$;

\item[{\rm (A6)}] $x \oplus (y \odot x^\sim) = y \oplus (x\odot y^\sim ) = ( y^-\odot x) \oplus y = ( x^-\odot y) \oplus x;$

\item[{\rm (A7)}] $(x^- \oplus y)\odot x = y\odot (x \oplus y^\sim)$;

\item[{\rm (A8)}] $(x^-)^\sim= x$.
\end{enumerate}

If we define $x \le y$ iff $x^- \oplus y=1$, then $\le$ is a partial order such that $A$ is a
distributive lattice with $x \vee y = x \oplus (y \odot x^\sim)$ and
$x \wedge y =  (x^- \oplus y)\odot x.$ For basic properties of
pseudo MV-algebras see \cite{GeIo}. We remind some of them:

\begin{enumerate}
\item[(i)]
$x^{-\sim}= x = x^{\sim-}$,
\item[(ii)] $x\odot y \le z$ iff $y \le z
\oplus x^\sim$ iff $x \le y^-\oplus z$,
\item[(iii)] $(x^-\oplus y) \vee
(y^-\oplus x) = 1 = (y \oplus x^\sim) \vee (x \oplus y^\sim).$
\end{enumerate}

We recall that a pseudo MV-algebra $\mathbf A$ is an MV-algebra iff $x\oplus y = y\oplus x$ iff $x\odot y=y\odot x$ for all $x,y \in A.$

An archetypal example of pseudo MV-algebras is obtained as follows. Let $(G,u)$ be a unital $\ell$-group written
additively, i.e. an $\ell$-group $G$ (not necessarily Abelian)\footnote{ We note that an $\ell$-{\it group} is a group $\mathbf G=(G;+,-,0,\le)$ with a neutral element $0$ and endowed with a partial order $\le$ such that (i) $a\le b$ implies $c+a+d\le c+b+d$ for all $c,d\in G$, (ii) $G$ under $\le$ is a lattice. For more info about $\ell$-groups, see e.g. \cite{Dar, Gla}.} with a fixed strong unit $u$ (i.e., given $g \in G$, there is an integer
$n\ge 1$ such that $g \le nu$). Set $\Gamma(G,u)=[0,u]=:\{g\in G:\
0\le g\le u\},$ and for all $x,y \in \Gamma(G,u)$, we define
\begin{align*}
 x\oplus y&:= (x+y)\wedge u,\\
x^-&:=u-x,\\
x^\sim &:=-x\oplus u,\\
x\odot y&:= (y-u+x)\vee 0,
\end{align*}
then $\mathbf{\Gamma}(G,u)=(\Gamma(G,u); \oplus, ^-,^\sim,0,u)$ is a pseudo MV-algebra.
According to \cite{151}, every pseudo MV-algebra is isomorphic to some
$\Gamma(G,u),$ and, in addition, the functor $\mathbf{\Gamma}$ defines a
categorical equivalence  between the category of pseudo MV-algebras
(that is a variety) and the category of unital $\ell$-groups,
$(G,u),$ that is not a variety because it is not closed under
infinite direct products.

In particular if $\mathbf G$ is an arbitrary $\ell$-group and $\mathbb Z$ is the group of integers, then the pseudo MV-algebra $\Gamma(\mathbb Z\lex \mathbf G,(1,0))$ belongs to the variety $\mathcal M$; here $\lex$ means the lexicographic product of two $\ell$-groups. We note, that for this example both negations coincide even in the case that $\mathbf G$ is not Abelian.

We note that an important class of pseudo BL-algebras is the class of pseudo MV-algebras: (1) Let $(A; \oplus, \odot, ^-,
^\sim, 0,1)$ be a pseudo MV-algebra. Define
$$
x\to_B y := x^- \oplus y, \quad x \squig_B y:= y \oplus x^\sim.
$$
Then $(A;\vee,\wedge, \odot, \to_B, \squig_B,0,1)$ is a pseudo BL-algebra such
that
$$x^{-_B} := x\to_B 0 =  x^-,\quad x^{\sim_B} := x\squig_B 0 =
x^\sim,\quad x^{-_B\sim_B} = x = x^{\sim_B-_B},
$$
and $(x^{-_B} \odot y^{-_B})^{\sim_B} = (x^{\sim_B} \odot
y^{\sim_B})^{-_B}.$

If we define $x\oplus_A y := (x^{-_B} \odot y^{-_B})^{\sim_B} =
(x^{\sim_B} \odot y^{\sim_B})^{-_B}, $ then $x\oplus_A y= x\oplus
y$,  $x^{-_A} := x^{-_B} = x^-,$ $x^{\sim_A} := x^{\sim_B} =
x^\sim$, and $(A; \oplus_A, ^{-_A},^{\sim_A},0,1)$ is a pseudo
MV-algebra that coincides with the original one.

(2) If \A\ is a
pseudo BL-algebra such that $x^{-\sim} = x = x^{\sim-}$, then
$(x^-\cdot y^-)^\sim = (x^\sim \cdot y^\sim)^-$. We set
$$ x \oplus_A y := (x^-\cdot y^-)^\sim = (x^\sim \cdot
y^\sim)^-,\quad x^{-_A} := x\squig 0,\quad x^{\sim_A} := x\to 0.
$$
Then $(A;\oplus_A,^{-_A},^{\sim_A},0,1)$ is a pseudo MV-algebra.

If we set $x\to_B y := x^{-_A} \oplus_A y$ and $x\squig_B y :=
y \oplus_A x^{\sim_A}$, then $\to_B = \to$ and $\squig_B = \squig$, and
$(A; \vee, \wedge, \cdot, \to_B,\squig_B,0,1)$ is a pseudo BL-algebra that
coincides with the original one.

In other words, a pseudo BL-algebra $\mathbf A$ is a pseudo MV-algebra if, and only if, $x^{-\sim}=x=x^{\sim-}$ for each $x \in A$.

We denote by $\mathcal{PMV}$ and $\mathcal{MV}$ the variety of pseudo MV-algebras and the variety of MV-algebras, respectively.

It is well know that there is a categorical equivalence between the variety of MV-algebras and the category of Abelian unital $\ell$-groups, \cite{Mun}. The same is true for the variety of pseudo MV-algebras and the category of unital $\ell$-groups (not necessarily Abelian), \cite{151}. %Due to \cite[Thm 10.A]{Gla}, the variety of Abelian $\ell$-groups is the least non-trivial variety of the variety of $\ell$-groups, $\mathcal L$, and the variety of normal-valued $\ell$-groups is the greatest proper subvariety of $\mathcal L$.  However,
The variety lattice of the variety $\mathcal{MV}$ is thanks to Komori \cite{Kom} countable, whereas the variety lattice of the variety $\mathcal{PMV}$ is uncountable, \cite[Thm 7.2]{DDT}, and the variety of normal-valued pseudo MV-algebras is not the greatest proper subvariety of the variety of pseudo MV-algebras, it is properly contained in the variety of pseudo MV-algebras whose every maximal filter is normal. Therefore, the study of MV-algebras and pseudo MV-algebras enables us to investigate in more details the variety of Abelian $\ell$-groups and the variety $\mathcal L$. The aim of the next two sections is to find an equational class of the variety $\mathcal{MNPBL}$ and $\mathcal M=\mathcal{MNPBL}\cap \mathcal{PMV}$, respectively.

\section{Pseudo BL-algebras where Every Maximal Filter is Normal}%3

In this section, we derive a countable equational base for the variety of pseudo BL-algebras whose every maximal filter is normal.

In the following two sections, we will use a shorter notion $xy$ instead of $x\cdot y$.

Let \A\ be a pseudo BL-algebra. For any filter $F\subseteq A$, we can define two equivalences $\sim_l$ and $\sim_r$ induced by $F$ by
$$x\sim_l y \,\, \mbox{if, and only if,} \,\, x\ra y, y\ra x\in F,$$
and
$$x\sim_r y \,\, \mbox{if, and only if,} \,\, x\rra y, y\rra x\in F.$$
Then the following conditions are equivalent:
\begin{itemize}
\item[(i)] The equivalence $\sim_l$ is a congruence.
\item[(ii)] The equivalence $\sim_r$ is a congruence.
\item[(iii)] The equivalences $\sim_l$ and $\sim_r$ are equal.
\end{itemize}
By $Fx$ we denote the class of equivalence $\sim_l$ containing the element $x$. Analogously, by $xF$ we denote the class of equivalence $\sim_r$ containing the element $x$. It can be shown that $Fx$=$Fy$ if, and only if, there exist $f_1,f_2\in F$ satisfying $f_1x=f_2y$ (respectively $xF$=$yF$ if, and only if, there exists $g_1,g_2\in F$ satisfying $xg_1=yg_2$) for all $x,y\in A$. In addition, $\sim_l$ induces an order $\le:=\le_{\sim_l}$ on the set of equivalences given by $Fx\le Fy$ iff $x\to y \in F$, or equivalently $fx\le y$ for some $f \in F$. Dually, the equivalence $\sim_r$ induces an order $\le:=\le_{\sim_l}$: $xF\le yF$ iff $x\rra y\in F$ iff $xg\le y$ for some $g\in F$.

In what follows, we assume that $\mathbf A$ is non-trivial, i.e. $0\ne 1$.

\begin{lemma}\label{MainLemma}
Let \A\ be a pseudo BL-algebra from $\mathcal{MNPBL}$. If there are $x_1,\dots,x_n \in A$ such that
$$
V0 = V\big (\prod_{i=1}^n x_i\big)
$$
for all maximal filters $V$ of $\mathbf A$, then
$$
0=\prod_{i=1}^n x_i^2.
$$
\end{lemma}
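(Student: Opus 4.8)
The plan is to argue by contradiction: suppose $c:=\prod_{i=1}^n x_i^2\neq 0$. Then the filter $F(c)$ generated by $c$ is a proper filter (since $c\neq 0$ and $0$ is the bottom, $0\notin F(c)$), so it extends to some maximal filter $V$ by a Zorn's lemma argument. By hypothesis $\mathbf A\in\mathcal{MNPBL}$, so $V$ is normal, hence the quotient $\mathbf A/V$ makes sense and is a nontrivial linearly ordered pseudo BL-algebra (maximal filters are prime, and the quotient by a maximal normal filter is linearly ordered and simple-ish). The key point I want to exploit is the assumed identity $V0=V(\prod_i x_i)$, which says exactly that $\prod_i x_i$ represents the bottom element $\bar 0$ in $\mathbf A/V$, i.e. $\prod_i x_i\in V\cdot 0$-class, equivalently there are $f_1,f_2\in V$ with $f_1\cdot 0=f_2\cdot\prod_i x_i$, that is $0=f_2\prod_i x_i$.

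Next I would push this back to a statement about $c$. From $0 = f\cdot\prod_i x_i$ for some $f\in V$ I want to derive $0=\prod_i x_i^{2}$, or at least reach a contradiction with $c\in V$. Here is the mechanism I expect to use: in a pseudo BL-algebra $u\cdot w=0$ iff $w\le u^\sim$ iff $u\le w^-$, and crucially squaring/multiplying within filters interacts with annihilators. Since $c=\prod_i x_i^2\in F(c)\subseteq V$, some power $c^m\le$ (something in $V$), and combining $c\in V$ with $0=f\prod_i x_i$ (with $f\in V$, so $cf\in V$ is proper, in particular $cf\neq 0$) should collide. More concretely: $cf\in V$, and I want to show $cf=0$, which contradicts $cf\in V\subsetneq A$. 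To get $cf=0$ I would use that $\prod_i x_i^2$ dominated by a product structure lets me "absorb" the extra factor $f$ coming from $0=f\prod_i x_i$: morally $\prod_i x_i^2 = (\prod_i x_i)(\prod_i x_i)$ up to the reordering that normality provides, and one copy of $\prod_i x_i$ together with a suitable filter element kills $0$.

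The main technical obstacle — and the heart of the lemma — is the non-commutativity: $\prod_i x_i^2$ is not literally $(\prod_i x_i)^2$, so I cannot simply write $\prod_i x_i^2 = (\prod_i x_i)(\prod_i x_i)$. The role of the squares $x_i^2$ (rather than $x_i$) is presumably to give enough "slack" so that, working modulo the normal maximal filter $V$ (where $a\to b\in V \Leftrightarrow a\squig b\in V$ lets one swap left and right divisions, hence reshuffle products up to filter elements), one can reorganize $\prod_i x_i^2$ into a form $f_1(\prod_i x_i)f_2$ or $(\prod_i x_i)g$ with the extra factors lying in $V$; then the relation $0=f\prod_i x_i$ forces the whole thing to be $0$. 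So the core steps are: (1) get a maximal filter $V\supseteq F(\prod_i x_i^2)$ if $\prod_i x_i^2\neq 0$; (2) translate $V0=V(\prod_i x_i)$ into $0=f\prod_i x_i$, $f\in V$; (3) use normality of $V$ to rewrite $\prod_i x_i^2$ modulo $V$ so that a single factor $\prod_i x_i$ can be peeled off and annihilated via step (2), yielding $\prod_i x_i^2\in$ the $V$-class of $0$; (4) since also $\prod_i x_i^2\in V$, conclude $0\in V$, contradicting properness of $V$. I expect step (3), the normality-driven reshuffling with the squares providing the needed redundancy, to be where all the real work lies.
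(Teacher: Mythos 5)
There is a genuine gap at the very first step of your plan: from $c:=\prod_{i=1}^n x_i^2\neq 0$ you conclude that the generated filter $F(c)$ is proper because ``$0\notin F(c)$''. But by (2.1), $F(c)=\{x\in A: x\ge c^m \mbox{ for some } m\ge 1\}$, so $F(c)$ is proper if and only if \emph{no power} of $c$ is $0$, and a nonzero element of a pseudo BL-algebra can perfectly well be nilpotent (e.g.\ $c=\tfrac12$ in the standard {\L}ukasiewicz algebra on $[0,1]$ satisfies $c^2=0$). Hence even if the rest of your plan is carried out --- and steps (2)--(4) can indeed be completed, since $\mathbf A/V$ is commutative, so $V\big(\prod_i x_i^2\big)=V\big(\big(\prod_i x_i\big)^2\big)=V0$, giving $vc=0$ for some $v\in V$ and thus $0\in V$, a contradiction with properness --- the contradiction only refutes the assumption that $F(c)$ is proper. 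What you obtain is therefore ``$c^m=0$ for some $m$'', which is strictly weaker than the assertion of the lemma, namely the exact equality $\prod_i x_i^2=0$; and it is this exact equality that is used afterwards (Theorem~\ref{th:3.2}) to produce the inequalities of the equational base.

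The paper gets the equality on the nose by a mechanism that is absent from your sketch: it proves that $\big(\prod_i x_i^2\big)^-$ belongs to \emph{every minimal prime filter} $M$, whence it equals $1$. To do this it embeds $M$ in a maximal (hence normal) filter $V$, takes the least index $j$ with $x_j\notin V$, uses normality to conclude $\big(x_j\prod_{i>j}x_i^2\big)^-\in V$, notes that consequently $\big(x_j\prod_{i>j}x_i^2\big)^-\to\big(\prod_{i<j}x_i^2\big)x_j$ cannot lie in $V$ (so not in $M$), and then uses primeness of $M$ together with prelinearity to flip this implication, identifying the flipped implication with $\big(\prod_i x_i^2\big)^-$ via the exchange property $(xy)\to z=x\to(y\to z)$. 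None of these ingredients (minimal prime filters, primeness/prelinearity, exchange) appear in your proposal, and without something of this kind the passage from ``$\prod_i x_i^2$ lies in the $0$-class modulo every maximal filter'' to ``$\prod_i x_i^2=0$'' does not go through.
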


\begin{proof}
We are going to prove that $(\prod_{i=1}^n x_i^2)^-$ belongs to all minimal prime filters. Let us have a minimal prime filter $M$. Using Zorn's Lemma, we can see that there is a maximal filter $V$ containing  $M$, $V$ is also prime (see also \cite[Thm 4.28]{DGI1}). From our assumption $V0 = V\big (\prod_{i=1}^n x_i\big)$, we concludes that there exists minimal $j\in\{1,\dots, n\}$ satisfying $x_j\not\in V$ (otherwise, $x_i\in V$ for all $i\in\{1,\dots, n\}$ and $V0 = V\big (\prod_{i=1}^n x_i\big) =V1$ which is absurd). Using $x_1,\dots,x_{j-1}\in V$ and normality of $V$, we obtain
$$
V\big(x_j\prod_{i=j+1}^n x_i^2\big)=V1\cdot V\big(x_j\prod_{i=j+1}^n x_i^2\big)= V\big (\prod_{i=1}^{j-1} x_i\big)\cdot V\big(x_j\prod_{i=j+1}^n x_i^2\big)
$$
$$
=V\big (\big(\prod_{i=1}^{j-1} x_i\big )x_j\prod_{i=j+1}^n x_i^2\big)\leq V\big(\prod_{i=1}^{n} x_i\big ) =V0,
$$
and thus $(x_j\prod_{i=j+1}^n x_i^2\big)^-\in V.$

Because $x_j\not\in V$, then $\big(\prod_{i=1}^{j-1}x_i^2\big)x_j\not\in V$, and divisibility gives
$$
(x_j\prod_{i=j+1}^n x_i^2\big)^- \ra \big(\prod_{i=1}^{j-1}x_i^2\big)x_j\not \in V
$$
and
$$
(x_j\prod_{i=j+1}^n x_i^2\big)^- \ra \big(\prod_{i=1}^{j-1}x_i^2\big)x_j\not \in M.
$$
Since $M$ is a prime filter, using prelinearity, we obtain
$$
\big(\prod_{i=1}^{j-1}x_i^2\big)x_j \ra (x_j\prod_{i=j+1}^n x_i^2\big)^-  \in M.
$$
The exchange property, \cite[Prop 3.8(1')]{DGI1}, finally yields
$$
\big(\prod_{i=1}^{n}x_i^2\big)^-  = \big(\prod_{i=1}^{j-1}x_i^2\big)x_j \ra (x_j\prod_{i=j+1}^n x_i^2\big)^-\in M.
$$

The condition $\big(\prod_{i=1}^{n}x_i^2\big)^-\in M$ for all minimal prime filters proves that $\big(\prod_{i=1}^{n}x_i^2\big)^-=1$ which is equivalent to $\prod_{i=1}^{n}x_i^2=0$.
\end{proof}

\begin{theorem}\label{th:3.2}
Let \A\ be a pseudo BL-algebra from $\mathcal{MNPBL}$. Then the following inequalities hold for $\mathbf A$:
\begin{itemize}
\item[(i)]
$$\big(\big(\prod_{i=1}^nx_{i}\big )^-\big )^2\leq \big(\prod_{i=1}^n x_{\pi(i)}^2\big)^-
$$
for every $n\in \mathbb N$ and for every permutation $\pi$ on the set $\{1,\dots, n\}$,
\item[(ii)]
$$
(((x\ra y)^n)^-)^2\leq ((x\rra y)^{2n})^-
$$
for every $n\in \mathbb N$,
\item[(iii)]
$$
(((x\rra y)^n)^-)^2\leq ((x\ra y)^{2n})^-
$$
for every $n\in \mathbb N$.
\end{itemize}
\end{theorem}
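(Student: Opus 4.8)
The plan is to deduce all three inequalities from Lemma~\ref{MainLemma} by one uniform device: the residuation law $uv\le w\iff u\le v\ra w$ together with the divisibility identity $c^-c=c\wedge0=0$. The point is that an inequality of the form $a^2\le (z_1^2\cdots z_n^2)^-$, with a negation of a product of squares on the right, is by residuation equivalent to $a^2z_1^2\cdots z_n^2=0$, and the latter is exactly the conclusion $0=\prod_{i=0}^n z_i^2$ of Lemma~\ref{MainLemma} applied to the list $z_0:=a,z_1,\dots,z_n$. Hence, in each case, it suffices to choose the list and to verify the hypothesis of Lemma~\ref{MainLemma}, i.e.\ that $V0=V(z_0z_1\cdots z_n)$ for every maximal filter $V$; splitting the factor $z_0^2$ back off then yields the inequality. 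The lists I would use are: for (i), $z_0=\big(\prod_{i=1}^n x_i\big)^-$ and $z_j=x_{\pi(j)}$ for $j=1,\dots,n$, so that $z_1^2\cdots z_n^2=\prod_{i=1}^n x_{\pi(i)}^2$; for (ii), $z_0=\big((x\ra y)^n\big)^-$ and $z_1=\cdots=z_n=x\rra y$, so that $z_1^2\cdots z_n^2=(x\rra y)^{2n}$; for (iii), the mirror list $z_0=\big((x\rra y)^n\big)^-$ and $z_1=\cdots=z_n=x\ra y$.

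For the hypothesis, fix a maximal filter $V$. In (i) with $\pi$ the identity it is immediate, since already in $\mathbf A$ one has $(\prod x_i)^-\cdot x_1\cdots x_n=(\prod x_i)^-\cdot\prod x_i=0$; the substance is an arbitrary permutation (and the interchange of $\ra$ and $\rra$ in (ii) and (iii)). For that I would pass to the quotient. Since $\mathbf A\in\mathcal{MNPBL}$, $V$ is normal, so $\sim_l=\sim_r$ is a congruence; put $B:=A/V$ with projection $\phi\colon A\to B$ and $\phi^{-1}(1)=V$. The filters of $B$ correspond to the filters of $A$ containing $V$, namely only $V$ and $A$; hence $B$ is simple, so every element of $B$ below $1$ is nilpotent (otherwise it would generate a proper nontrivial filter), i.e.\ $B$ is a locally finite pseudo BL-algebra, hence an MV-chain, in particular commutative with $\ra$ and $\rra$ agreeing on $B$. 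This forces the relevant $\phi$-image to have the form $c^-c$: in (i), $\phi(z_1\cdots z_n)=\phi(x_{\pi(1)})\cdots\phi(x_{\pi(n)})=\phi(x_1)\cdots\phi(x_n)=:c$ and $\phi(z_0)=c^-$; in (ii) and (iii), $\phi(x\ra y)=\phi(x\rra y)=:w$, so $\phi(z_0)=(w^n)^-$, $\phi(z_1\cdots z_n)=w^n$, and one takes $c:=w^n$. In every case $\phi(z_0z_1\cdots z_n)=c^-c=0$, so $z_0z_1\cdots z_n\sim_V0$, i.e.\ $V0=V(z_0z_1\cdots z_n)$, and Lemma~\ref{MainLemma} applies.

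The one step that is not formal bookkeeping is the assertion that $A/V$ is commutative for a maximal filter $V$; this is precisely what makes the permutation in (i) and the interchange of $\ra$ and $\rra$ in (ii) and (iii) harmless, and it genuinely uses more than prelinearity, since over a non-commutative linearly ordered pseudo BL-algebra the square-free versions of these identities fail. I would isolate it as a preliminary lemma to the effect that a maximal filter of a member of $\mathcal{MNPBL}$ has a simple, hence commutative, quotient, appealing to the structure theory of simple (equivalently, locally finite) pseudo BL-algebras.
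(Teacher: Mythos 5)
Your proposal is correct and follows essentially the same route as the paper: verify the hypothesis of Lemma~\ref{MainLemma} for the lists $\big(\prod_i x_i\big)^-, x_{\pi(1)},\dots,x_{\pi(n)}$ (resp.\ $((x\to y)^n)^-, x\rra y,\dots,x\rra y$, and its mirror) by passing to the quotient $\mathbf A/V$ by a maximal (hence normal) filter, using that this quotient is commutative so the image of the product is $c^-c=0$, and then residuate the resulting identity $a^2\prod z_i^2=0$ into the stated inequality. The only difference is that you justify the commutativity of $\mathbf A/V$ (simple, hence locally finite, hence an MV-chain) in more detail than the paper, which merely asserts it parenthetically.
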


\begin{proof}
{\rm (i)} Let $V$ be an arbitrary maximal filter. By hypotheses, $V$ is normal. Take arbitrary elements $x_1,\dots, x_n\in A$ and any permutation $\pi$ on the set $\{1,\dots, n\}$. Because $\mathbf A/V$ is a BL-algebra (it is commutative), we obtain
$$
V\big(\big(\prod_{i=1}^nx_{i}\big )^-\prod_{i=1}^nx_{\pi(i)}\big )=V\big (\big(\prod_{i=1}^nx_{i}\big )^-\prod_{i=1}^nx_{i}\big )= V0.
$$
Lemma \ref{MainLemma} yields
$$
\big (\big (\prod_{i=1}^nx_{i}\big )^-\big )^2\prod_{i=1}^nx_{\pi(i)}^2=0
$$
and consequently also
$$
\big (\big(\prod_{i=1}^nx_{i}\big )^-\big )^2\leq\big (\prod_{i=1}^nx_{\pi(i)}^2\big )^-.
$$

{\rm (ii)} Analogously to (i), let $V$ be an arbitrary maximal filter and $x,y\in A$ be arbitrary elements. Then commutativity of the algebra $\mathbf A/V$ gives
$$
V(((x\ra y)^n)^-(x\rra y)^n)=V(((x\ra y)^n)^-(x\ra y)^n)=V0
$$
for any $n\in\mathbb N$. Using Lemma \ref{MainLemma}, we obtain
$$
(((x\ra y)^n)^-)^2(x\rra y)^{2n}=0
$$
and consequently also
$$
(((x\ra y)^n)^-)^2\leq ((x\rra y)^{2n})^-.
$$

{\rm (iii)} It can be proved analogously to the proof of (ii).
\end{proof}

\begin{theorem}\label{th:3.3}
Let \A\ be a pseudo BL-algebra satisfying inequalities {\rm (i), (ii)} and {\rm (iii)} from Theorem {\rm \ref{th:3.2}}. Then $\mathbf A$ is a pseudo BL-algebra such that every  its maximal filter is normal.
\end{theorem}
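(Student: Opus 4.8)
The plan is to fix an arbitrary maximal filter $V$ of $\mathbf A$ (if $\mathbf A$ has none, the statement is vacuous) and to prove directly that $x\to y\in V$ if and only if $x\squig y\in V$ for all $x,y\in A$. The two implications are symmetric, one using inequality (iii) and the other inequality (ii), so I will carry out $x\to y\in V\Rightarrow x\squig y\in V$ and only indicate the obvious changes for the reverse. Throughout I would use that $V$ is proper, hence $0\notin V$; that $V$ is prime; that $V$ is closed under $\wedge$ (since $st\le s\wedge t$ whenever $s,t\le 1$, because $st\le s$ and $st\le t$); and the residuation law in the forms $z^-z=0$, $\;uz=0\Leftrightarrow u\le z^-$, and $zu=0\Leftrightarrow u\le z^\sim$.

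The core of the proof is the following claim, which uses only inequality (i): \emph{if $a\notin V$, then $(a^p)^-\in V$ for some $p\ge 1$} (and symmetrically $(a^p)^\sim\in V$, though only the first form is needed below). To see this, maximality of $V$ forces $F(V\cup\{a\})=A$, so $0$ belongs to this filter and hence $0=c_1\cdots c_m$ for suitable factors $c_i\in V\cup\{a\}$; since $0\notin V$, at least one factor equals $a$. Put $v:=\bigwedge\{c_i:c_i\in V\}\in V$ (or $v:=1$ if no factor lies in $V$). Replacing each $c_i\in V$ by the smaller element $v$ and using monotonicity of multiplication gives $0=v^{k_0}a\,v^{k_1}a\cdots a\,v^{k_r}$ with $r\ge 1$ occurrences of $a$. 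Now apply inequality (i) with $n=m$, taking for the $x_i$ these $m$ factors and for $\pi$ the permutation that lists all $v$-positions before all $a$-positions: the unpermuted product is $0$ and $0^-=1$, so the left-hand side equals $1$ and we get $1\le\big(v^{2(m-r)}a^{2r}\big)^-$, i.e. $v^{2(m-r)}a^{2r}=0$. Choosing $p\ge\max\{2(m-r),2r,1\}$ and absorbing the surplus powers via $v\le 1$ and $a\le 1$, we obtain $v^pa^p=0$; since $w:=v^p\in V$, residuation yields $w\le (a^p)^-$, so $(a^p)^-\in V$. Listing the $a$-positions first instead gives $a^pv^p=0$, whence $(a^p)^\sim\in V$.

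Granting the claim, the theorem follows at once. Assume $x\to y\in V$ but $a:=x\squig y\notin V$. The claim gives $p\ge 1$ with $\big((x\squig y)^p\big)^-\in V$, so its square lies in $V$; inequality (iii) with this $p$ yields $\big((x\to y)^{2p}\big)^-\ge\big(\big((x\squig y)^p\big)^-\big)^2$, hence $\big((x\to y)^{2p}\big)^-\in V$. But $x\to y\in V$ also gives $(x\to y)^{2p}\in V$, and $z^-z=0$ for $z=(x\to y)^{2p}$, so $0=\big((x\to y)^{2p}\big)^-\cdot(x\to y)^{2p}\in V$, contradicting properness of $V$. Running the same argument with $a:=x\to y$ and inequality (ii) in place of (iii) gives $x\squig y\in V\Rightarrow x\to y\in V$. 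Hence $V$ is normal, and since $V$ was an arbitrary maximal filter, $\mathbf A\in\mathcal{MNPBL}$.

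I expect the only delicate point to be the bookkeeping inside the claim: converting the abstract membership $0\in F(V\cup\{a\})$ into a single relation $v^pa^p=0$ with $v\in V$ despite the non-commutativity of $\cdot$, and verifying that inequality (i) is invoked with the correct $n$ and permutation (remembering that it also squares each factor). Everything past that point is routine residuated-lattice calculation.
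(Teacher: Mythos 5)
Your proof is correct and follows essentially the same route as the paper: you use maximality to produce a zero product of elements of $V$ and copies of $a$, apply inequality (i) with a sorting permutation to obtain $v^pa^p=0$ (the paper's Claims 1 and 2, which yield $vx^{2k}=0$ with $v=\prod_i v_i^2$), and then combine $(a^p)^-\in V$ with inequalities (ii)/(iii), closure under products, and properness of $V$ to force normality. The only differences are cosmetic (taking the meet of the $V$-factors instead of their product, and stating the two implications in contrapositive form), so no gaps remain.
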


\begin{proof}
Firstly we prove the following claims.
\begin{cl}\label{c1}
Let $x_1,\dots,x_n\in A$ be arbitrary elements satisfying $0=\prod_{i=1}^n x_i$. Then $0=\prod_{i=1}^n x_{\pi(i)}^2$ holds for every permutation $\pi$ on the set $\{1,\dots, n\}$.
\end{cl}

\begin{proof}
Using inequality (i), we obtain
$$
1=(0^-)^2 = \big(\big (\prod_{i=1}^n x_i\big)^-\big )^2\leq \big(\prod_{i=1}^n x_{\pi(i)}^2\big)^-\leq 1
$$
and consequently also
$0=\prod_{i=1}^n x_{\pi(i)}^2$.
\end{proof}

\begin{cl}\label{c2}
If $V$ is a maximal filter and $x\in A$ is such that $x\not\in V$, then there exist $v\in V$ and $n\in\mathbb N$ such that $vx^n=0$ $($respectively $x^nv=0)$.
\end{cl}

\begin{proof}
Because $V$ is a maximal filter, by (2.1), there exist $v_1,\dots,v_k\in V$ such that $\prod_{i=1}^k(v_ix)=0$. Using Claim \ref{c1}, we obtain $vx^{2k}=x^{2k}v=0$, where $v=\prod_{i=1}^kv_i^2\in V$.
\end{proof}
\vspace{2mm}
Let $V$ be a maximal filter and $x,y\in A$ be such that $x\ra y\not\in V.$ Using Claim \ref{c2}, there exist $v\in V$ and $n\in\mathbb N$ such that $v(x\ra y)^n =0$. Thus $v\leq ((x\ra y)^n)^-$ yields $(((x\ra y)^n)^-)^2\in V$. Inequalities (ii) entail $((x\rra y)^{2n})^-\in V$ and also $x\rra y\not\in V$ (contrary $x\rra y\in V$ yields $(x\rra y)^{2n}\in V$ and also $0=((x\rra y)^{2n})^-(x\rra y)^{2n}\in V$ which is absurd). We have proved that $x\ra y\not\in V$ implies $x\rra y\not\in A$. The converse implication can be proved analogously using inequalities (iii). Thus $V$ is a normal filter.
\end{proof}

Combining Theorems \ref{th:3.2}--\ref{th:3.3}, we have the following characterization of the variety $\mathcal{MNPBL}$:

\begin{corollary}\label{co:3.4}
A pseudo BL-algebra \A\ belongs to $\mathcal{MNPBL}$ if, and only if, it satisfies the system of inequalities {\rm (i)--(iii)} from Theorem {\rm \ref{th:3.2}}.
\end{corollary}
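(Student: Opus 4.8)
The plan is essentially bookkeeping: the corollary is nothing more than the conjunction of the two directions already isolated as Theorems \ref{th:3.2} and \ref{th:3.3}. For the forward implication I would invoke Theorem \ref{th:3.2}, which shows that every pseudo BL-algebra lying in $\mathcal{MNPBL}$ satisfies the inequalities (i)--(iii); under the running convention adopted just before Lemma \ref{MainLemma} this is stated for non-trivial $\mathbf A$, and the trivial algebra $A=\{1\}$ satisfies (i)--(iii) vacuously since there $0=1$ and both sides of each inequality collapse to the single element. For the reverse implication I would invoke Theorem \ref{th:3.3}: any pseudo BL-algebra satisfying (i)--(iii) has all its maximal filters normal, hence lies in $\mathcal{MNPBL}$, the trivial case again being immediate from the definition of $\mathcal{MNPBL}$, which explicitly admits $A=\{1\}$. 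Composing the two statements yields the biconditional for every pseudo BL-algebra $\mathbf A$.

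A secondary point worth recording in the proof is that the list (i)--(iii) genuinely constitutes an \emph{equational} base. Indeed, each inequality $s\le t$ in the signature of pseudo BL-algebras may be rewritten as the identity $s\wedge t = s$ (equivalently $s\vee t = t$, or $s\to t = 1$), and for fixed $n$ and fixed permutation $\pi$ each instance of (i), (ii), (iii) is a single such identity; letting $n$ and $\pi$ range over their (countably many) values turns (i)--(iii) into a countable family of identities. Thus Corollary \ref{co:3.4} re-derives, now with an explicit base, the known fact that $\mathcal{MNPBL}$ is a subvariety of $\mathcal{PBL}$ (\cite[Thm 4.1]{DGK}, \cite[Prop 4.10]{GLP}).

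Since both ingredients are already established, there is no real obstacle at this stage; the only point requiring a word of care is the trivial algebra, and it is disposed of by the conventions preceding Lemma \ref{MainLemma} together with the definition of $\mathcal{MNPBL}$. Accordingly, the written proof can be kept to a single sentence citing Theorems \ref{th:3.2} and \ref{th:3.3}, optionally followed by the remark on the equational form of (i)--(iii).
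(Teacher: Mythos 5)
Your proposal is correct and matches the paper's own argument, which simply combines Theorems \ref{th:3.2} and \ref{th:3.3} to obtain the biconditional; your extra remarks on the trivial algebra and on rewriting inequalities as identities are harmless additions but not needed.
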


In the paper \cite{BDK}, it was found a countable system of inequalities that characterizes the variety of normal-valued pseudo BL-algebras, see \cite[Cor 6.9]{BDK}, which is a proper subvariety of the variety $\mathcal{MNPBL}$. If we take the variety of pseudo BL-algebras satisfying the equality $(x \to y)\rra y=(x\rra y)\to y$, this infinite system of inequalities has reduced to a unique inequality $x^2y^2\le yx$, see \cite[Thm 6.12]{BDK}.

\begin{problem}
Can we replace the system of inequalities {\rm (i), (ii)} and {\rm (iii)} by some finite one? What about pseudo MV-algebras from $\mathcal{MNPBL}$, is the inequality $(x^-)^2\leq (x^2)^-$ enough? (Or may be also $(x^{--})^2 \le ((x^\sim)^2)^-$ and $x^2 \le ((x^-)^2)^-$?)
\end{problem}

The following result enables to skip inequalities (ii) and (iii) in Theorem \ref{th:3.2}.

\begin{theorem}\label{th:3.5}
Let $\mathbf A$ be a pseudo BL-algebra satisfying the identities $(x\to y)^-=(x\rra y)^-$ and $(x\to y)(x\rra y)=(x\rra y)(x\to y)$. Then $\mathbf A \in \mathcal{MNPBL}$ if, and only if, {\rm (i)} of Theorem {\rm \ref{th:3.2}} holds for $\mathbf A$.
\end{theorem}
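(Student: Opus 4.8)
\emph{Proof plan.} The ``only if'' direction is free: if $\mathbf A\in\mathcal{MNPBL}$, then inequality (i) holds for $\mathbf A$ by Theorem~\ref{th:3.2}(i). So the whole task is the converse. Assume $\mathbf A$ is a pseudo BL-algebra satisfying the two identities $(x\to y)^-=(x\rra y)^-$ and $(x\to y)(x\rra y)=(x\rra y)(x\to y)$ together with inequality (i) of Theorem~\ref{th:3.2}. The plan is to deduce inequalities (ii) and (iii) of Theorem~\ref{th:3.2}, after which Theorem~\ref{th:3.3} yields $\mathbf A\in\mathcal{MNPBL}$.

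Fix $x,y\in A$ and abbreviate $a:=x\to y$, $b:=x\rra y$, so the hypothesised identities read $a^-=b^-$ and $ab=ba$. I would first record the routine facts, valid in any pseudo BL-algebra, that $(uv)^-=u\to v^-$ (an immediate consequence of residuation and associativity), that $(u\to v)\,u=u\wedge v$ (divisibility, axiom (iv)), and that $b^-b=0$. Moreover, since $a$ and $b$ commute, the submonoid of $(A;\cdot,1)$ that they generate is commutative, so any product of copies of $a$ and $b$ may be rearranged at will.

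The heart of the matter is the identity $(a^n)^-b^n=0$, equivalently $(a^n)^-\le(b^n)^-$, for every $n\ge1$, which I would prove by induction on $n$. For $n=1$ it reads $a^-b=b^-b=0$. For $n\ge2$, using $(a^n)^-=(a^{n-1}a)^-=a^{n-1}\to a^-$, divisibility, and $a^-=b^-$, one gets $(a^n)^-a^{n-1}=a^{n-1}\wedge b^-\le b^-$, whence $(a^n)^-a^{n-1}b\le b^-b=0$. Since $a^{n-1}b=ba^{n-1}$, this gives $(a^n)^-ba^{n-1}=0$, i.e.\ $(a^n)^-b\le(a^{n-1})^-$; chaining with the inductive hypothesis $(a^{n-1})^-\le(b^{n-1})^-$ yields $(a^n)^-b\cdot b^{n-1}=0$, that is $(a^n)^-b^n=0$. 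I would then apply Claim~\ref{c1} (which rests on inequality (i)) to the product of the $n+1$ elements $(a^n)^-,b,\dots,b$, whose value is $0$: squaring each factor gives $((a^n)^-)^2b^{2n}=0$, i.e.\ $((a^n)^-)^2\le(b^{2n})^-$, which is precisely inequality (ii). Inequality (iii) follows at once by running the same argument with $a$ and $b$ interchanged — legitimate because both hypotheses $a^-=b^-$ and $ab=ba$ are symmetric in $a$ and $b$ — and Theorem~\ref{th:3.3} then completes the proof.

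I expect the one delicate point to be the step from $(a^n)^-a^{n-1}b=0$ to $(a^n)^-ba^{n-1}=0$: this is exactly where the commutativity identity $(x\to y)(x\rra y)=(x\rra y)(x\to y)$ enters (for $n\ge2$), since one must bring a copy of $a$ to the far right in order to peel off a power of $a$ via residuation. Everything else is bookkeeping with divisibility and residuation on top of the already available Claim~\ref{c1}.
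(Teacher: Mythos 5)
Your argument is correct, and its overall skeleton matches the paper's: both directions reduce to showing that, under the two identities, inequalities (ii) and (iii) of Theorem~\ref{th:3.2} follow, after which Corollary~\ref{co:3.4} (Theorems~\ref{th:3.2}--\ref{th:3.3}) finishes the job. The difference lies in the key lemma and in what it depends on. The paper proves, by induction using the exchange property $(uv)\to z=u\to(v\to z)$ and the commutation hypothesis, the \emph{equality} $((x\to y)^n)^-=((x\rra y)^n)^-$ for all $n$, and then gets (ii) and (iii) by a one-line computation: $(((x\to y)^n)^-)^2(x\rra y)^{2n}=((x\to y)^n)^-((x\rra y)^n)^-(x\rra y)^n(x\rra y)^n=0$, the middle factors vanishing by divisibility. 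In particular, in the paper (ii) and (iii) are consequences of the two identities alone, with no appeal to (i). You instead prove only the one-sided statement $((x\to y)^n)^-(x\rra y)^n=0$ (via divisibility, residuation and commutation), and then you must invoke Claim~\ref{c1} — i.e.\ inequality (i) — to square the factors and double the exponents, obtaining $(((x\to y)^n)^-)^2(x\rra y)^{2n}=0$; the symmetric run gives (iii). This extra dependence on (i) is harmless, since (i) is hypothesized in the only nontrivial direction, so your proof is complete; what the paper's route buys is the slightly stronger information that the identities $(x\to y)^-=(x\rra y)^-$ and $(x\to y)(x\rra y)=(x\rra y)(x\to y)$ by themselves force (ii) and (iii), reducing membership in $\mathcal{MNPBL}$ to (i) alone in a self-contained way, while your route is marginally leaner on computation by settling for the inequality and letting the already-proved squaring mechanism of Claim~\ref{c1} do the rest.
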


\begin{proof}
Let a pseudo BL-algebra $\mathbf A$ satisfy $(x\to y)^-=(x\rra y)^-$ and $(x\to y)(x\rra y)=(x\rra y)(x\to y)$. We assert that $((x\to y)^n)^- = ((x\rra y)^n)^-$ for each integer $n\ge 1$. Assume for induction that it holds for any integer $k$ with $1\le k\le n$. We have
\begin{align*}
(x\to y)^{n+1} \to 0 &= (x\to y) \to ((x\to y)^n\to 0)\quad \mbox{use \cite[Prop 3.8(1')]{DGI1}} \\
&= (x\to y)\to ((x\rra y)^n\to 0)\\
&=(x\to y)(x\rra y)^n \to 0\\
&= (x\rra y)^n(x\to y)\to 0\\
&= (x\rra y)^n \to ((x\to y)\to 0)\\
&= (x\rra y) \to ((x\rra y)\to 0)\\
&= (x\rra y)^{n+1} \to 0.
\end{align*}

To prove that $\mathbf A$ satisfies (ii) of Theorem \ref{th:3.2}, we have to show that $(((x\to y)^n)^-)^2((x\rra y)^{2n}=0$. Calculate

\begin{align*}
(((x\to y)^n)^-)^2((x\rra y)^{2n} &= ((x \to y)^n)^-((x\to y)^n\to 0)(x\rra y)^{2n}\\
&= ((x \to y)^n)^-((x\rra y)^n\to 0)(x\rra y)^{n} (x\rra y)^n\\
&= ((x \to y)^n)^-((x\rra y)^n)^-(x\rra y)^{n} (x\rra y)^n =0.
\end{align*}

Similarly, to prove that $\mathbf A$ satisfies (iii) of Theorem \ref{th:3.2}, we have to show $(((x\rra y)^n)^-)^2(x\to y)^{2n}=0.$  Check

\begin{align*}
(((x\rra y)^n)^-)^2((x\to y)^{2n} &= ((x \rra y)^n)^-((x\rra y)^n\to 0)(x\to y)^{2n}\\
&= ((x \rra y)^n)^-((x\to y)^n\to 0)(x\to y)^{n} (x\to y)^n\\
&= ((x \rra y)^n)^-((x\to y)^n)^-(x\to y)^{n} (x\to y)^n =0.
\end{align*}

\end{proof}

We note that if $\mathbf A$ is a pseudo MV-algebra, then the condition $(x\to y)^-=(x\rra y)^-$ gives $(x\to y)^{-\sim}=(x\rra y)^{-\sim}$ and $x\to y = x\rra y$, i.e. $\mathbf A$ is an MV-algebra.

On the other hand, let $\mathbf A_1$ be a linear BL-algebra and $\mathbf A_2$ be a pseudo BL-algebra that is neither a BL-algebra nor linear. Then the ordinal sum $\mathbf A= \mathbf A_1 \oplus \mathbf A_2$ is a pseudo BL-algebra that satisfies the equation $(x\to y)^-=(x\rra y)^-$ and $\mathbf A$ is not commutative.

\section{Unital Basic Pseudo Hoops where Every Maximal Filter is Normal}%4

In this section, we extend the results of the previous section for unital basic pseudo hoops whose each maximal filter is normal.

Pseudo BL-algebras were recently generalized in
\cite{GLP} to pseudo hoops, which were
originally introduced by Bosbach in \cite{Bos1, Bos2} under the name
residuated integral monoids.  We recall that a {\it pseudo hoop} is
an algebra $\mathbf A= (A; \cdot, \to,\squig,1)$ of type $\langle 2,2,2,0
\rangle$ such that, for all $x,y,z \in A,$

\begin{enumerate}

\item[{\rm (i)}]   $x\cdot 1 = x = 1 \cdot x;$

 \item[{\rm (ii)}] $x\to x = 1 = x\squig x;$

\item[{\rm (iii)}] $(x\cdot y) \to z = x \to (y\to z);$

 \item[{\rm (iv)}] $(x \cdot y) \squig z = y \squig
(x\squig z);$

 \item[{\rm (v)}] $(x\to y) \odot x= (y\to x)\cdot y =
x\cdot (x\squig y) = y \cdot (y \squig x).$

\end{enumerate}

If $\cdot$ is commutative (equivalently $ \to = \squig$), $\mathbf A$ is
said to be a {\it hoop}.  If we set $x \le y$ iff $x \to y=1$ (this
is equivalent to $x \squig y =1$), then $\le$ is a partial order
such that $x\wedge y = (x\to y)\odot x$ and $A$ is a
$\wedge$-semilattice. If \A\ is a pseudo hoop, then the reduct $(A; \cdot, \to,\squig,1)$ is a pseudo hoop.

We say that a pseudo hoop $\mathbf A$ is (i)  {\it bounded} if there is a least element $0,$ otherwise, $\mathbf A$ is {\it unbounded},  and (ii) {\it cancellative}, if $x\odot y=x\odot z$ and $s\odot x=t \odot x$ imply $y=z$ and $s=t.$

Let now $\mathbf G$ be an $\ell$-group (written additively and with a
neutral element $0$). On the negative cone $G^-=\{g\in G:\ g\le 0\}$
we define:  $x\cdot y :=x+y,$ $x\to
y := (y-x)\wedge 0,$ $x\squig y :=(-x+y)\wedge 0$ for $x,y
\in G^-.$  Then $(G^-;\cdot,\to,\squig,0)$ is an unbounded (whenever $G\ne \{0\}$) cancellative pseudo hoop.

A pseudo hoop $\mathbf A$ is said to be {\it basic} if, for all $x,y,z \in
A,$

\begin{enumerate}

\item[{\rm (B1)}] $(x\to y) \to z \le ((y\to x)\to z)\to z$;

 \item[{\rm (B2)}] $(x\squig y) \squig z \le ((y\squig
x)\squig z)\squig z$.

\end{enumerate}

Every basic pseudo hoop is a distributive lattice, and for every pseudo BL-algebra \A, the reduct $(A; \cdot, \to,\squig,1)$ is a basic pseudo hoop.

For example, if $\mathbf A= \mathbf A_1 \oplus \{0,1\}$ is the ordinal sum of a pseudo hoop $\mathbf A_1$ that is not linearly ordered and of a two-element Boolean algebra $\{0,1\}$, then $\mathbf A$ is a pseudo hoop that is not basic \cite[Rem 5.10]{GLP}.

We note that a pseudo hoop is basic if, and only if, the prelinearity holds in it, \cite[Lem 4.5,4.6]{GLP}, \cite[Prop 3.2]{BDK}.

For pseudo hoops we define filters, maximal filters, normal filters, and values, respectively, in the same way as those for pseudo BL-algebras. We define prime filters  and minimal prime filters only for basic pseudo hoops and also in the same way as those for pseudo BL-algebras. The basic properties of prime and minimal prime filters of basic pseudo hoops can be viewed in \cite[Sect 4]{BDK}.

We say that an element $u \in A$ is a {\it strong unit} of a pseudo hoop $\mathbf A$ if the filter generated by $u$ coincides with $A$, and similarly as for $\ell$-groups, the couple $(\mathbf A,u)$, where $\mathbf A$ is a pseudo hoop with a fixed strong unit $u$, is said to be a {\it unital pseudo hoop}. For example, if $\mathbf A$ is a pseudo BL-algebra, then the bottom element $0$ is a strong unit. It is important to notice that the class of unital pseudo hoops is closed under subalgebras, homomorphic images, but, similarly as the class of unital $\ell$-groups, it is not closed under infinite direct products.

On the other hand, the class of cancellative unital pseudo hoops is categorically equivalent to the class of unital $\ell$-groups and this class in view of \cite{151} is categorically equivalent to the variety of pseudo MV-algebras.

We note that a maximal filter does not exist in any pseudo hoop. For example, if $A=(0,1]$ is a real interval equipped with $s\cdot t = \min\{s,t\}= s\wedge t$, $s\vee t=\max\{s,t\}$, and $s\to t = 1$ iff $s\le t$, otherwise $s\to t = t$
$(s,t \in (0,1])$, then the hoop $A$ does not posses any maximal filter; all filters are of the form $F_s:=(s,1]$, $s \in (0,1]$.

\begin{lemma}\label{le:4.1}
Let $\mathbf A$ be a pseudo hoop.

{\rm (1)} If $\mathbf A$ has a strong unit, then it has a maximal filter.

{\rm (2)} If $\mathbf A$ is linearly ordered, then it has a maximal filter if, and only if, $\mathbf A$ has a strong unit. In any such case, $\mathbf A$ has a unique maximal filter, $V$, and every element $u \in A\setminus V$ is a strong unit.

{\rm (3)} If $\mathbf A$ contains a proper filter $V$ containing all proper filters of $\mathbf A$, then every element $u \in A\setminus V$ is a strong unit and $V$ is a maximal filter.

{\rm (4)} If every proper filter of $\mathbf A$ belongs to some maximal filter and $\bigcup\{V: V \in \mathcal M(\mathbf A)\}\ne A$, then every element $u \in A\setminus \bigcup\{V: V \in \mathcal M(\mathbf A)\}$ is a strong unit.
\end{lemma}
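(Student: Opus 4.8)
The plan is to reduce all four items to the explicit description of the filter generated by a single element. In a pseudo hoop the filter $F(u)$ generated by $u\in A$ is $\{x\in A:\ x\ge u^n\ \text{for some}\ n\ge 1\}$: the right-hand set is upward closed and, since $\cdot$ is monotone and $u^nu^m=u^{n+m}$, closed under $\cdot$, and it is plainly the smallest filter containing $u$. Hence $u$ is a strong unit exactly when $F(u)=A$, equivalently when $A$ is the only filter containing $u$.

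With this observation, items (3) and (4) are almost immediate. For (3), let $V$ be a proper filter containing every proper filter of $\mathbf A$ and take $u\in A\setminus V$; if $F(u)$ were proper it would be contained in $V$, forcing $u\in F(u)\subseteq V$, a contradiction, so $F(u)=A$ and $u$ is a strong unit. Moreover $V$ is maximal, since a proper filter $W$ with $V\subseteq W$ satisfies $W\subseteq V$ by hypothesis, whence $W=V$. For (4) the same argument applies, with ``contained in $V$'' replaced by ``contained in some maximal filter $V$'': if $F(u)$ were proper, then $u\in V\subseteq\bigcup\{V:V\in\mathcal M(\mathbf A)\}$, contradicting $u\notin\bigcup\{V:V\in\mathcal M(\mathbf A)\}$, so again $F(u)=A$.

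For (1) one must assume $\mathbf A$ is nontrivial, as is tacit whenever one speaks of maximal filters. A strong unit $u$ then differs from $1$, so $\{1\}$ is a proper filter and the family of proper filters of $\mathbf A$, ordered by inclusion, is nonempty. The key point is that, because $u$ is a strong unit, a filter is proper if and only if it does not contain $u$; therefore the union of any chain of proper filters is a filter that still omits $u$, hence is proper. Applying Zorn's Lemma to the poset of proper filters yields a maximal element, which is precisely a maximal filter of $\mathbf A$.

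For (2), the implication ``strong unit $\Rightarrow$ maximal filter'' is just (1). For the converse, together with the uniqueness clause, I would first record the auxiliary fact that in a linearly ordered pseudo hoop the set of all filters is totally ordered by inclusion: if $F\not\subseteq G$ and $G\not\subseteq F$, pick $f\in F\setminus G$ and $g\in G\setminus F$; linearity gives $f\le g$ or $g\le f$, and upward closedness of $F$, respectively of $G$, then yields $g\in F$, respectively $f\in G$, a contradiction in either case. Granting this, a maximal filter $V$ contains every proper filter (a proper filter comparable with $V$ cannot strictly contain it, by maximality), hence is unique and satisfies the hypothesis of (3); by (3) every element of the nonempty set $A\setminus V$ is a strong unit. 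The routine parts are the formula for $F(u)$ and this chain lemma. The only real content is the Zorn argument in (1), whose engine is the remark that a strong unit is exactly an element lying outside every proper filter, which is what keeps increasing unions of proper filters proper; beyond that, the main thing to keep track of is the standing nontriviality hypothesis used in (1) and (2).
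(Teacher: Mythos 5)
Your proof is correct and follows essentially the same route as the paper: (3) and (4) via the observation that a proper filter containing $u$ would force $F(u)$ to be proper, and (2) via comparability of filters in a linearly ordered pseudo hoop together with (3). For (1) the paper simply says that any value of $u$ is a maximal filter, which is the same Zorn-type argument you spell out directly (a filter is proper iff it omits the strong unit $u$), including the tacit nontriviality assumption.
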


\begin{proof}
(1) Let $u$ be a strong unit of $\mathbf A$. Then any value of $u$ is a maximal filter of $\mathbf A$.

(2) Now let $\mathbf A$ be linearly ordered and let $V$ be a maximal filter of $\mathbf A$. Since $A$ is linearly ordered, every proper filter of $\mathbf A$ is contained in $V.$ Then every element $u \in A\setminus V$ is a strong unit of $\mathbf A$.

(3) Let $\mathbf A$ be a pseudo hoop with a proper filter $V$ containing all proper filters of $\mathbf A$. Then $V$ is maximal. If $u \in A\setminus V$, then $F(u)$ cannot be a proper filter, proving $u$ is a strong unit.

(4) The proof follows the same steps as that of (3).
\end{proof}

We note that if $\mathcal{MNPH}$ is the class of pseudo hoops $\mathbf A$ such that either every maximal filter of $\mathbf A$ is normal or $A=\{0\}$.  In contrast to the class of $\mathcal{MNPBL}$, the class $\mathcal{MNPH}$ is not a variety, see \cite[Rem 1]{DGK}. In what follows we show when a unital basic pseudo hoop belongs to the class $\mathcal{MNPH}$. To show that, we will follow ideas developed in the previous section and we adopt the corresponding proofs.

\begin{lemma}\label{le:4.2}
Let $(\mathbf A,u)$ be a unital basic pseudo hoop from $\mathcal{MNPH}$. If there are $x_1,\dots,x_n \in A$ such that
$$
V\big (\prod_{i=1}^n x_i\big)\le Vu
$$
for all maximal filters $V$ of $\mathbf A$, then
$$
\prod_{i=1}^n x_i^2\le u.
$$
\end{lemma}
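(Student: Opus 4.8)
The plan is to run the proof of Lemma~\ref{MainLemma} essentially verbatim, with the bottom element $0$ replaced throughout by the strong unit $u$ and the negation $x^-$ by $x\to u$. Since $\prod_{i=1}^n x_i^2\le u$ is equivalent to $\big(\prod_{i=1}^n x_i^2\big)\to u=1$, it suffices to show that $\big(\prod_{i=1}^n x_i^2\big)\to u$ belongs to every minimal prime filter $M$ of $\mathbf A$; this yields the claim because the intersection of all minimal prime filters of a nontrivial basic pseudo hoop is $\{1\}$ (see \cite[Sect~4]{BDK}), the trivial case $A=\{0\}$ being immediate.

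The one genuinely new point, and the place where the strong unit is indispensable, is to guarantee that $M$ lies below a \emph{normal maximal} filter (recall that an arbitrary pseudo hoop may have no maximal filter). The poset of proper filters of $\mathbf A$ containing $M$ is closed under unions of chains, because a union of proper filters cannot contain $u$: any filter containing $u$ contains $F(u)=A$. Hence Zorn's Lemma yields a maximal filter $V\supseteq M$; it is prime, and since $\mathbf A\in\mathcal{MNPH}$ it is normal, so $\sim_l$ and $\sim_r$ agree on $V$ and both the product of $\sim_l$-classes and the induced order on classes are well defined.

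Next, from $V\big(\prod_{i=1}^n x_i\big)\le Vu$ I would pick the least $j$ with $x_j\notin V$: if every $x_i$ were in $V$, then $V\big(\prod_{i=1}^n x_i\big)=V1\le Vu$ would force $u\in V$, contradicting properness of $V$. Using $x_1,\dots,x_{j-1}\in V$, normality of $V$, and $x_i^2\le x_i$ with monotonicity of $\cdot$,
\begin{align*}
V\Big(x_j\prod_{i=j+1}^n x_i^2\Big)
&=V\Big(\prod_{i=1}^{j-1}x_i\Big)\cdot V\Big(x_j\prod_{i=j+1}^n x_i^2\Big)\\
&=V\Big(\Big(\prod_{i=1}^{j-1}x_i\Big)x_j\prod_{i=j+1}^n x_i^2\Big)\\
&\le V\Big(\prod_{i=1}^n x_i\Big)\le Vu,
\end{align*}
hence $\big(x_j\prod_{i=j+1}^n x_i^2\big)\to u\in V$. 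Since $\big(\prod_{i=1}^{j-1}x_i^2\big)x_j\le x_j$ does not lie in $V$, divisibility $(a\to b)a=a\wedge b$ together with the filter properties of $V$ shows that $\big(\big(x_j\prod_{i=j+1}^n x_i^2\big)\to u\big)\to\big(\prod_{i=1}^{j-1}x_i^2\big)x_j$ is not in $V$, hence not in $M$. As $M$ is prime, prelinearity (which holds in basic pseudo hoops) then forces $\big(\prod_{i=1}^{j-1}x_i^2\big)x_j\to\big(\big(x_j\prod_{i=j+1}^n x_i^2\big)\to u\big)\in M$, and three applications of the residuation identity $(xy)\to z=x\to(y\to z)$ (axiom (iii) of pseudo hoops, cf.\ \cite[Prop~3.8(1')]{DGI1}) rewrite the left-hand side as $\big(\prod_{i=1}^n x_i^2\big)\to u$. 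Since $M$ was an arbitrary minimal prime filter, $\big(\prod_{i=1}^n x_i^2\big)\to u=1$, i.e.\ $\prod_{i=1}^n x_i^2\le u$.

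Thus I expect the work to split into a short filter-theoretic observation (every proper filter of a unital pseudo hoop is contained in a maximal one, and maximal filters of a basic pseudo hoop are prime) and then a routine transcription of the pseudo BL-algebra computation into the pseudo-hoop language, using only that $1$ is the top element, $\cdot$ is monotone, $x^2\le x$, divisibility, prelinearity, and the residuation identity for $\to$. The main obstacle is really just the first observation, i.e.\ recognizing that the strong unit is precisely what is needed here; the rest is bookkeeping of indices and residuals.
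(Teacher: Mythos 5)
Your proposal is correct and follows essentially the same route as the paper's own proof: reduce to showing $\bigl(\prod_{i=1}^n x_i^2\bigr)\to u$ lies in every minimal prime filter, pass to a normal maximal filter $V\supseteq M$, pick the least $j$ with $x_j\notin V$, and run the normality--divisibility--prelinearity--exchange computation of Lemma~\ref{MainLemma} with $0$ replaced by $u$. Your extra observation that the strong unit is what makes Zorn's Lemma (and hence the existence of a maximal filter above $M$) work is a point the paper leaves implicit, but it is the same argument.
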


\begin{proof}
We follow the basic steps from the proof of Lemma \ref{MainLemma}.

We show $(\prod_{i=1}^n x_i^2)\to u$ belongs to all minimal prime filters. Let $M$ be a minimal prime filter, and let $V$ be an arbitrary maximal filter containing $M$.

From our assumption $Vu = V\big (\prod_{i=1}^n x_i\big)$ we concludes that there exists minimal $j\in\{1,\dots, n\}$ satisfying $x_j\not\in V$ (otherwise, $x_i\in V$ for all $i\in\{1,\dots, n\}$ and $Vu = V\big (\prod_{i=1}^n x_i\big) =V1$ which is absurd). Using $x_1,\dots,x_{j-1}\in V$ and the normality of $V$, we obtain
$$
V\big(x_j\prod_{i=j+1}^n x_i^2\big)=V1\cdot V\big(x_j\prod_{i=j+1}^n x_i^2\big)= V\big (\prod_{i=1}^{j-1} x_i\big)\cdot V\big(x_j\prod_{i=j+1}^n x_i^2\big)
$$
$$=V\big (\big(\prod_{i=1}^{j-1} x_i\big )x_j\prod_{i=j+1}^n x_i^2\big)\leq V\big(\prod_{i=1}^{n} x_i\big ) \le Vu,
$$
and thus $(x_j\prod_{i=j+1}^n x_i^2\big)\to u\in V.$

Because $x_j\not\in V$, then $\big(\prod_{i=1}^{j-1}x_i^2\big)x_j\not\in V$, and divisibility gives
$$
((x_j\prod_{i=j+1}^n x_i^2\big)\to u) \ra \big(\prod_{i=1}^{j-1}x_i^2\big)x_j\not \in V
$$
and
$$
((x_j\prod_{i=j+1}^n x_i^2\big)\to u) \ra \big(\prod_{i=1}^{j-1}x_i^2\big)x_j\not \in M.
$$
Since $M$ is a prime filter, using prelinearity, we obtain
$$
\big(\prod_{i=1}^{j-1}x_i^2\big)x_j \ra ((x_j\prod_{i=j+1}^n x_i^2\big)\to u) \in M.
$$
The exchange property, \cite[Thm 2.2(A2)]{GLP}, finally yields
$$
\big(\prod_{i=1}^{n}x_i^2\big)\to u  = \big(\prod_{i=1}^{j-1}x_i^2\big)x_j \ra ((x_j\prod_{i=j+1}^n x_i^2\big)\to u)\in M.
$$

The condition $\big(\prod_{i=1}^{n}x_i^2\big)\to u\in M$ for all minimal prime filters proves that $\big(\prod_{i=1}^{n}x_i^2\big)\to u=1$ which is equivalent to $\prod_{i=1}^{n}x_i^2\le u$.
\end{proof}

\begin{theorem}\label{th:4.3}
Let $(\mathbf A,u)$  be a unital basic pseudo hoop from $\mathcal{MNPH}$. Then the following inequalities hold for $\mathbf A$:
\begin{itemize}
\item[(i)]
$$\big(\big(\prod_{i=1}^nx_{i}\big )\to u\big )^2\leq \big(\prod_{i=1}^n x_{\pi(i)}^2\big)\to u
$$
for every $n\in \mathbb N$ and every permutation $\pi$ on the set $\{1,\dots, n\}$,
\item[(ii)]
$$
(((x\ra y)^n)\to u)^2\leq ((x\rra y)^{2n})\to u
$$
for every $n\in \mathbb N$,
\item[(iii)]
$$
(((x\rra y)^n)\to u)^2\leq ((x\ra y)^{2n})\to u
$$
for every $n\in \mathbb N$.
\end{itemize}
\end{theorem}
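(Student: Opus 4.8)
The plan is to copy the proof of Theorem~\ref{th:3.2} almost verbatim, under the dictionary ``replace the bottom element $0$ by the strong unit $u$, replace the negation $x^-$ by $x\to u$, and use Lemma~\ref{le:4.2} in place of Lemma~\ref{MainLemma}''. So I would fix an arbitrary maximal filter $V$ of $\mathbf A$. By hypothesis $V$ is normal, so $\mathbf A/V$ is a well-defined unital basic pseudo hoop with strong unit $Vu$; since $V$ is maximal it is prime, hence $\mathbf A/V$ is linearly ordered, and its only proper filter is $\{1\}$, so $\mathbf A/V$ is simple. As in the pseudo BL-algebra case, a simple linearly ordered unital basic pseudo hoop is commutative (it is either cancellative, hence the negative cone of an archimedean and therefore abelian $\ell$-group, or it has a least element, hence is a simple pseudo BL-chain and so an MV-chain); thus $\mathbf A/V$ is a hoop.

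Granting this, I would first prove (i). For $x_1,\dots,x_n\in A$ and a permutation $\pi$, commutativity of $\mathbf A/V$ gives $V\big(\prod_{i=1}^n x_i\big)=V\big(\prod_{i=1}^n x_{\pi(i)}\big)$, so, using the divisibility identity $\big(\big(\prod_{i=1}^n x_i\big)\to u\big)\prod_{i=1}^n x_i=\big(\prod_{i=1}^n x_i\big)\wedge u$ valid in $\mathbf A$,
$$
V\big(\big(\big(\prod_{i=1}^n x_i\big)\to u\big)\prod_{i=1}^n x_{\pi(i)}\big)=V\big(\big(\big(\prod_{i=1}^n x_i\big)\to u\big)\prod_{i=1}^n x_i\big)\le Vu.
$$
Since $V$ was arbitrary, Lemma~\ref{le:4.2} applied to the $(n+1)$-tuple $\big(\big(\prod_{i=1}^n x_i\big)\to u,\,x_{\pi(1)},\dots,x_{\pi(n)}\big)$ yields $\big(\big(\prod_{i=1}^n x_i\big)\to u\big)^2\prod_{i=1}^n x_{\pi(i)}^2\le u$, and residuation (that is, $ab\le u$ iff $a\le b\to u$, which holds in every pseudo hoop) turns this into inequality (i).

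Then I would do (ii) and (iii) the same way. In the commutative quotient $\mathbf A/V$ the classes of $x\to y$ and $x\rra y$ coincide, so, again using divisibility, for every maximal filter $V$ one has
$$
V\big(\big((x\to y)^n\to u\big)(x\rra y)^n\big)=V\big(\big((x\to y)^n\to u\big)(x\to y)^n\big)\le Vu;
$$
Lemma~\ref{le:4.2} applied to the $(n+1)$-tuple consisting of $(x\to y)^n\to u$ followed by $n$ copies of $x\rra y$ then gives $\big((x\to y)^n\to u\big)^2(x\rra y)^{2n}\le u$, which is (ii) after residuation; interchanging the roles of $\to$ and $\rra$ throughout gives (iii).

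The only content-bearing step — everything else being routine — is the commutativity of $\mathbf A/V$ for a maximal (normal) filter $V$; this is the pseudo hoop counterpart of the fact invoked silently in the proof of Theorem~\ref{th:3.2} that $\mathbf A/V$ is a BL-algebra, and I expect it to be the main obstacle. It should follow from the structure of linearly ordered (pseudo) hoops: H\"older's theorem settles the unbounded simple case, and the known description of simple pseudo BL-chains settles the bounded one (cf.\ \cite{212}, \cite{BDK}). Beyond that, one only needs to check that $\mathbf A/V$ is a basic pseudo hoop because $V$ is normal, that the canonical surjection commutes with $\cdot$, $\to$ and $\rra$, that $Vu$ really is a strong unit of $\mathbf A/V$, and that the tuples handed to Lemma~\ref{le:4.2} meet its hypothesis at \emph{every} maximal filter simultaneously — which they do, since each displayed inequality was derived for an arbitrary maximal filter.
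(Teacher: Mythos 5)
Your proof is correct and takes essentially the same route as the paper's: for each maximal (hence, by hypothesis, normal) filter $V$ you use commutativity of $\mathbf A/V$ together with divisibility to obtain $V\big(\big(\big(\prod_{i=1}^n x_i\big)\to u\big)\prod_{i=1}^n x_{\pi(i)}\big)\le Vu$ (and its analogues for (ii), (iii)), then apply Lemma~\ref{le:4.2} and residuation. The only difference is that you spell out a justification for the commutativity of $\mathbf A/V$ (linearity from primeness, simplicity from maximality, and the structure of simple chains via \cite{212}), which the paper asserts only parenthetically.
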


\begin{proof}
{\rm (i)} Let $V$ be an arbitrary maximal filter. By hypotheses, $V$ is normal. Take arbitrary elements $x_1,\dots, x_n\in A$ and any permutation $\pi$ on the set $\{1,\dots, n\}$. Because $\mathbf A/V$ is a hoop (it is commutative), we obtain
$$
V\big(\big(\big(\prod_{i=1}^nx_{i}\big )\to u\big)\prod_{i=1}^nx_{\pi(i)}\big )=V\big (\big(\big(\prod_{i=1}^nx_{i}\big )\to u\big)\prod_{i=1}^nx_{i}\big )= V\big(\big(\prod_{i=1}^nx_{i}\big) \wedge u\big) \le  Vu.
$$
Lemma \ref{le:4.2} yields
$$
\big (\big (\big(\prod_{i=1}^nx_{i}\big )\to u \big)\big )^2\prod_{i=1}^nx_{\pi(i)}^2\le u
$$
and consequently also
$$
\big (\big(\prod_{i=1}^nx_{i}\big )\to u\big )^2\leq\big (\prod_{i=1}^nx_{\pi(i)}^2\big )\to u.
$$

{\rm (ii)} Analogously to (i), let $V$ be an arbitrary maximal filter and $x,y\in A$ be arbitrary elements. Then commutativity of the algebra $\mathbf A/V$ gives
$$
V(((x\ra y)^n\to u)(x\rra y)^n)=V(((x\ra y)^n)\to u)(x\ra y)^n)= V((x\ra y)^n\wedge u)\le Vu
$$
for any $n\in\mathbb N$. Using Lemma \ref{le:4.2}, we obtain
$$
(((x\ra y)^n)\to u)^2(x\rra y)^{2n}\le u
$$
and consequently also
$$
(((x\ra y)^n)\to u)^2\leq ((x\rra y)^{2n})\to u.
$$

{\rm (iii)} It can be proved analogously to the proof of (ii).
\end{proof}

\begin{theorem}\label{th:4.4}
Let $(\mathbf A,u)$ be a unital basic pseudo hoop satisfying inequalities {\rm (i), (ii)} and {\rm (iii)} from Theorem {\rm \ref{th:4.3}}. Then $\mathbf A$ is a pseudo hoop  belonging to $\mathcal{MHPH}$.
\end{theorem}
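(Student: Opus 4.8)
The plan is to mirror, essentially line for line, the proof of Theorem \ref{th:3.3}, replacing the bottom element $0$ everywhere by the fixed strong unit $u$ and every negation $z^-$ by $z\to u$; note that the three inequalities of Theorem \ref{th:4.3} are precisely the unital‑hoop translates of inequalities (i)--(iii) of Theorem \ref{th:3.2} under this substitution. Throughout, $V$ denotes an arbitrary maximal filter of $\mathbf A$, and I will use freely that residuation ($x\cdot y\le z$ iff $x\le y\to z$ iff $y\le x\rra z$), divisibility, monotonicity of $\cdot$, and the existence of $\wedge$ all hold in any (basic) pseudo hoop, and that a strong unit cannot belong to a proper filter.

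First I would prove the hoop analogue of Claim \ref{c1}: if $x_1,\dots,x_n\in A$ satisfy $\prod_{i=1}^n x_i\le u$, then $\prod_{i=1}^n x_{\pi(i)}^2\le u$ for every permutation $\pi$ of $\{1,\dots,n\}$. This is immediate from inequality (i) of Theorem \ref{th:4.3}: $\prod_{i=1}^n x_i\le u$ means $(\prod_{i=1}^n x_i)\to u=1$, hence $1=1^2=((\prod_{i=1}^n x_i)\to u)^2\le(\prod_{i=1}^n x_{\pi(i)}^2)\to u\le 1$, so $\prod_{i=1}^n x_{\pi(i)}^2\le u$. Next I would establish the analogue of Claim \ref{c2}: if $x\notin V$, then there are $v\in V$ and $n\in\mathbb N$ with $vx^n\le u$ (and, symmetrically, $x^nv\le u$). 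This is where the strong unit replaces formula (2.1): since $u$ is a strong unit and $x\notin V$, the filter generated by $V\cup\{x\}$ is all of $A$, so it contains $u$; using the description of generated filters in a pseudo hoop (an element lies in the generated filter iff it dominates some finite product of generators), $u$ dominates a product $w_0xw_1x\cdots xw_k$ with $w_0,\dots,w_k\in V\cup\{1\}$ and $k\ge 1$ (if $k=0$ the product lies in $V$, forcing $u\in V$, impossible since $V$ is proper). Applying the previous claim to the list $w_0,x,w_1,x,\dots,x,w_k$ with the permutation that collects all the $w_i$'s first yields $vx^{2k}\le u$ with $v=\prod_{i=0}^k w_i^2\in V$; collecting the $x$'s first yields $x^{2k}v\le u$.

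With these two claims the argument closes exactly as in Theorem \ref{th:3.3}. Suppose $x\to y\notin V$. By the second claim there are $v\in V$, $n\in\mathbb N$ with $v(x\to y)^n\le u$, so $v\le((x\to y)^n)\to u$ by residuation, whence $((x\to y)^n)\to u\in V$ and therefore $(((x\to y)^n)\to u)^2\in V$. Inequality (ii) of Theorem \ref{th:4.3} then forces $((x\rra y)^{2n})\to u\in V$. If we had $x\rra y\in V$, then $(x\rra y)^{2n}\in V$, and divisibility gives $(((x\rra y)^{2n})\to u)\cdot(x\rra y)^{2n}=(x\rra y)^{2n}\wedge u\le u$, an element of $V$, forcing $u\in V$ — a contradiction. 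Hence $x\rra y\notin V$; the reverse implication is obtained the same way from inequality (iii). Thus every maximal filter of $\mathbf A$ is normal, i.e.\ $\mathbf A\in\mathcal{MNPH}$ (the trivial algebra being covered by the definition of $\mathcal{MNPH}$).

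The only genuinely new ingredient compared with Theorem \ref{th:3.3} — and the step I expect to need the most care — is the first half of the analogue of Claim \ref{c2}: pinning down the filter generated by $V\cup\{x\}$ and doing the bookkeeping that rewrites an arbitrary word over $V\cup\{x\}$ into the normal form $w_0xw_1\cdots xw_k$, now without a bottom element and without commutativity. Everything else is standard for pseudo hoops and can be quoted from \cite{GLP, BDK}.
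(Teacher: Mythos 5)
Your proposal is correct and follows essentially the same route as the paper: the same two claims (the permutation claim from inequality (i), and the claim that $x\notin V$ yields $v\in V$, $n$ with $vx^{n}\le u$ via the description of the filter generated by $V\cup\{x\}$, which the paper compresses into a citation of (2.1)), followed by the same residuation/divisibility argument showing $x\to y\notin V$ iff $x\rra y\notin V$. Your extra care in normalizing the word $w_0xw_1\cdots xw_k$ and in noting that $u\in V$ is impossible for a proper filter only makes explicit what the paper leaves implicit.
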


\begin{proof}
Firstly we prove the following claims.
\begin{cl}\label{c3}
Let $x_1,\dots,x_n\in A$ be arbitrary elements satisfying $\prod_{i=1}^n x_i\le u$. Then $\prod_{i=1}^n x_{\pi(i)}^2\le u$ holds for every permutation $\pi$ on the set $\{1,\dots, n\}$.
\end{cl}

\begin{proof}
Using inequality (i), we obtain
$$
1=(u\to u)^2 \leq \big(\big (\prod_{i=1}^n x_i\big)\to u\big )^2\leq \big(\prod_{i=1}^n x_{\pi(i)}^2\big)\to u\leq 1
$$
and consequently also
$\prod_{i=1}^n x_{\pi(i)}^2\le u$.
\end{proof}

\begin{cl}\label{c4}
If $V$ is a maximal filter and $x\in A$ is such that $x\not\in V$, then there exist $v\in V$ and $n\in\mathbb N$ such that $vx^n\le u$ $($respectively $x^nv\le u)$.
\end{cl}

\begin{proof}
Because $V$ is a maximal filter, by (2.1), there exist $v_1,\dots,v_k\in V$ such that $\prod_{i=1}^k(v_ix)\leq u$. Using Claim \ref{c3}, we obtain $vx^{2k}=x^{2k}v\le u$, where $v=\prod_{i=1}^kv_i^2\in V.$
\end{proof}
\vspace{2mm}
Let $V$ be a maximal filter and $x,y\in A$ be such that $x\ra y\not\in V$. Using Claim \ref{c4}, there exist $v\in V$ and $n\in\mathbb N$ such that $v(x\ra y)^n \le u$. Thus $v\leq ((x\ra y)^n)\to u$ yields $(((x\ra y)^n)\to u)^2\in V$. Inequalities (ii) entail $((x\rra y)^{2n})\to u\in V$ and also $x\rra y\not\in V$ (contrary $x\rra y\in V$ yields $(x\rra y)^{2n}\in V$ and also $u \ge (((x\rra y)^{2n})\to u)(x\rra y)^{2n}\in V$ which is absurd). We have proved that $x\ra y\not\in V$ implies $x\rra y\not\in A$. The converse implication can be proved analogously using inequalities (iii). Thus $V$ is a normal filter.
\end{proof}

Using two last theorems, we have the following criterion:

\begin{theorem}\label{th:4.5}
A unital basic pseudo hoop $(\mathbf A,u)$ belongs to $\mathcal{MNPH}$ if, and only if, it satisfies the system of inequalities {\rm (i)--(iii)} from Theorem {\rm \ref{th:4.3}}.
\end{theorem}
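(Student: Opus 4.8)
The plan is to obtain Theorem~\ref{th:4.5} as an immediate synthesis of the two preceding theorems; no genuinely new argument is needed. For the ``only if'' direction, assume $(\mathbf A,u)\in\mathcal{MNPH}$. Then Theorem~\ref{th:4.3} applies verbatim and yields that $\mathbf A$ satisfies each of the inequalities (i), (ii) and (iii), for all $n\in\mathbb N$ and all permutations $\pi$. For the ``if'' direction, assume $(\mathbf A,u)$ is a unital basic pseudo hoop satisfying (i)--(iii). Then Theorem~\ref{th:4.4} applies verbatim and gives that $\mathbf A$ belongs to $\mathcal{MNPH}$ (the symbol $\mathcal{MHPH}$ occurring in the statement of Theorem~\ref{th:4.4} being a misprint for $\mathcal{MNPH}$). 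Combining the two implications yields the stated equivalence.

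In the write-up I would also emphasize what the criterion does and does not say. Unlike Corollary~\ref{co:3.4}, where $\mathcal{MNPBL}$ is a genuine variety and (i)--(iii) form an equational base for it, the class $\mathcal{MNPH}$ is \emph{not} a variety, since the class of unital pseudo hoops is not closed under infinite direct products (and, as noted after Lemma~\ref{le:4.1}, $\mathcal{MNPH}$ itself fails to be a variety by \cite[Rem 1]{DGK}). Hence Theorem~\ref{th:4.5} is a characterization \emph{relative to the class of unital basic pseudo hoops}: the strong unit $u$ takes over the structural role that the bottom element $0$ plays in a pseudo BL-algebra, exactly as in the passage from pseudo MV-algebras to unital $\ell$-groups via $\mathbf{\Gamma}$. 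It is also worth checking (routinely) that for a pseudo BL-algebra $\mathbf A$ with $u=0$ the inequalities of Theorem~\ref{th:4.3} literally reduce to those of Theorem~\ref{th:3.2}, so that Theorem~\ref{th:4.5} genuinely extends Corollary~\ref{co:3.4}.

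There is essentially no obstacle at the level of Theorem~\ref{th:4.5} itself; the substance is already contained in Lemma~\ref{le:4.2}, which converts the hypothesis ``$V(\prod_{i=1}^n x_i)\le Vu$ for all maximal filters $V$, each of them normal'' into the global inequality $\prod_{i=1}^n x_i^2\le u$ by showing that $(\prod_{i=1}^n x_i^2)\to u$ lies in every minimal prime filter and then invoking the exchange property \cite[Thm 2.2(A2)]{GLP}. The only points requiring care --- and which justify retaining both the word ``unital'' and the word ``basic'' --- are hidden in the proofs of Theorems~\ref{th:4.3} and~\ref{th:4.4}: one needs that $(\mathbf A,u)$ possesses at least one maximal filter (Lemma~\ref{le:4.1}(1)), that quotients by maximal filters are linearly ordered hoops so that $\mathbf A/V$ is commutative, and that the theory of prime and minimal prime filters is available, which for pseudo hoops is guaranteed precisely by basicness (prelinearity), cf.\ \cite[Sect 4]{BDK}. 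If either hypothesis is dropped one of these ingredients can fail, and the equivalence is no longer expected to hold.
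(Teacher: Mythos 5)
Your proof is correct and matches the paper exactly: the paper derives Theorem~\ref{th:4.5} by simply combining Theorems~\ref{th:4.3} and~\ref{th:4.4} (with $\mathcal{MHPH}$ indeed being a misprint for $\mathcal{MNPH}$), which is precisely your argument. The additional remarks you include are accurate context but not needed for the proof itself.
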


\end{document}